\newcommand{\Prim}{\mathcal{P}}
\newcommand{\ZZ}{\mathbb{Z}}
\newcommand{\NN}{\mathbb{N}}
\newcommand{\RR}{\mathbb{R}}
\newcommand{\CC}{\mathbb{C}}
\newcommand{\EE}{\mathbb{E}}
\newcommand{\PP}{\mathbb{P}}
\newcommand{\bv}{\mathbf v}
\newcommand{\bn}{\mathbf n}
\bmdefine{\bbeta}{\beta}
\newcommand{\Jarnik}{Jarn\'{\i}k\xspace}
\newcommand{\Z}{\ZZ}
\newcommand{\N}{\NN}
\newtheorem{lemma}{Lemma}[section]
\newtheorem{theorem}{Theorem}
\newtheorem{corollary}[lemma]{Corollary}
\begin{document}

\begin{frontmatter}[classification=text]

\title{On the number of lattice convex chains}

\author[J. Bureaux]{Julien Bureaux}
\author[N. Enriquez]{Nathana\"el Enriquez}

\begin{abstract}
An asymptotic formula is presented for the number of planar lattice convex
polygonal lines joining the origin to a distant point of the diagonal.
The formula involves the non-trivial zeros of the zeta function and
leads to a necessary and sufficient condition for the Riemann Hypothesis to hold.
\end{abstract}
\end{frontmatter}

A convex chain with $k$ segments is defined as a sequence of points $(x_i,y_i)_{0\leq
i\leq k}$ of $\Z^2$ with $0=x_0<x_1<x_2<\cdots<x_{k-1}\leq x_k$, $0=y_0\leq y_1<y_2<\cdots<y_{k-1}<y_k$ and
\[
0\leq \frac{y_1-y_0}{x_1-x_0}<\cdots<\frac{y_k-y_{k-1}}{x_k-x_{k-1}}\leq +\infty
\]
(see Figure~\ref{fig:chain}).
The point $(x_k,y_k)$ will be called the endpoint of the chain. 
This paper deals with the enumeration of convex chains with endpoint $(n,n)$ on the diagonal, where the number of segments is not fixed. 

\begin{figure}[h]
\centering
\includegraphics[scale=0.65]{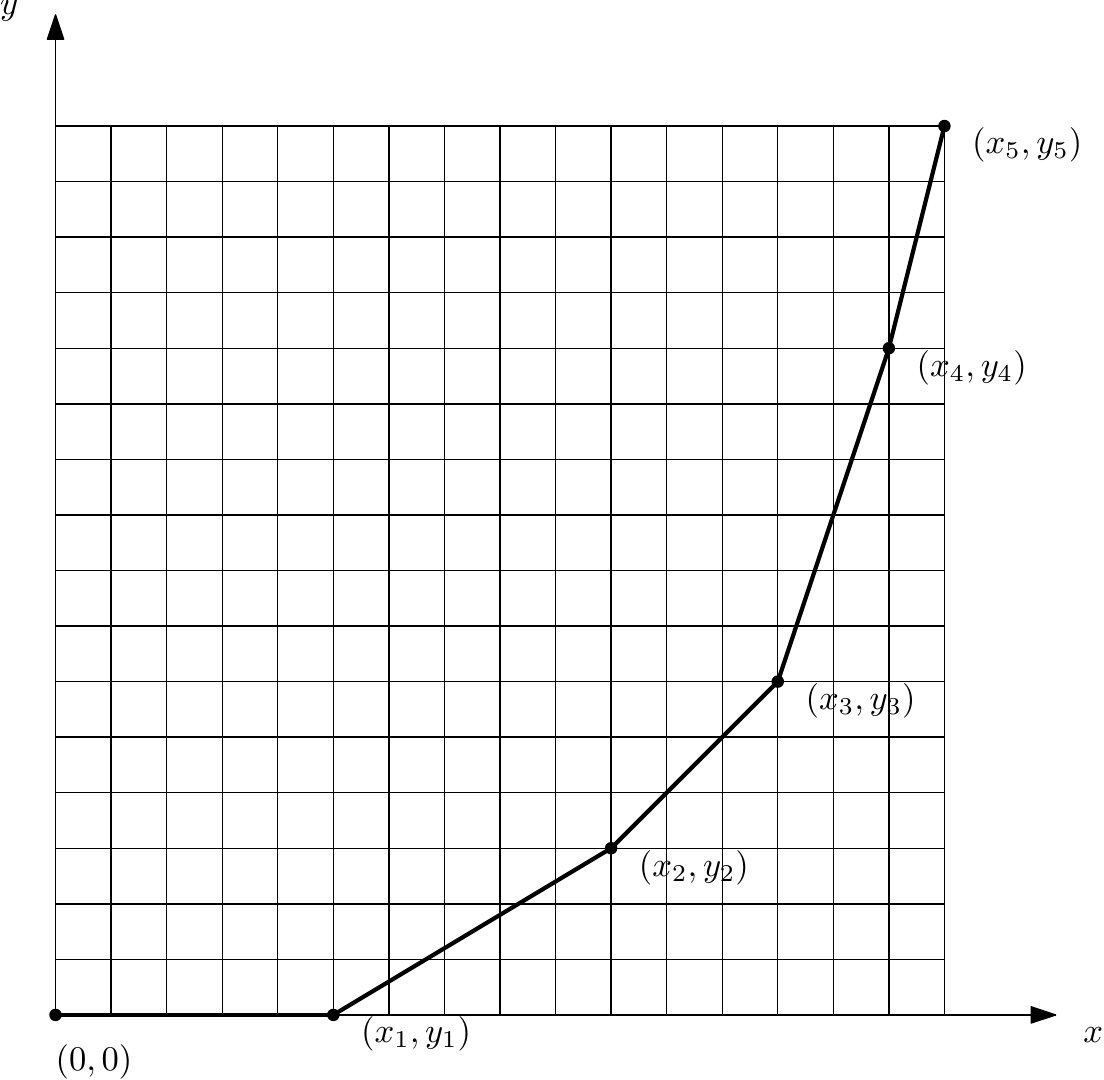}
\caption{Example of a convex chain with 5 segments}
\label{fig:chain}
\end{figure}

In 1979, Arnold~\cite{arnold_statistics_1980} considered the question
of the number of equivalence classes of convex lattice polygons having
a prescribed area $A$ (we say that two polygons having their vertices
on $\Z^2$ are equivalent if one is the image of the other by an affine
automorphism of $\Z^2$). Arnold obtained bounds of order $A^{1/3}$ up to a $\log A$ factor. 
In 1992, B\'ar\'any and Pach removed this extra factor in \cite{barany_pach_1992}. A extension in higher dimension involving the volume to the power $\frac{d-1}{d+1}$ was achieved by B\'ar\'any and Vershik in \cite{barany_vershik_1992}.
Later, Vershik changed the constraint in this problem and raised the
question of the number, and typical shape, of convex lattice polygons
included in a large box $[-n,n]^2$.
The cornerstone in this problem is the estimation of the number $p(n)$
of convex polygonal chains with vertices in $(\ZZ\cap [0,n])^2$ and
joining $(0,0)$ to $(n,n)$ (or more generally $(n,cn)$ for some $c>0$).
In 1994, a solution to this problem was found independently by
B\'ar\'any~\cite{barany_limit_1995}, Vershik~\cite{MR1275724} and Sinai~\cite{MR1283251}.
Namely, they showed by different methods that, as $n\to\infty$, 
$$
  p(n)=\exp\left[3\kappa^{1/3}n^{2/3}(1+o(1))\right],
  \qquad \text{where }
  \kappa=\frac{\zeta(3)}{\zeta(2)},
$$
and that the limit shape of a typical convex polygonal chain  is the
arc of parabola tangent to the sides of the square,
which maximizes the affine perimeter.

Note that the approach of Sinai was recently made rigorous and extended
by Bogachev and Zarbaliev~\cite{bogachev_zarbaliev_universality_2011}.
This is also this approach that we choose in this paper and we make use
of a proposition of this latter paper.
Here, we go further and fully exploit Sinai's probabilistic model by
giving an exact integral representation of the partition function and
by making a precise asymptotic analysis of it.
After our first post of this paper on arXiv, the cardinality of this set $p(n)$ was recorded by Koutschan in the OEIS as sequence A267862~\cite{oeis}.

The appearance of the values of the Riemann zeta function in the above
formula suggests the arithmetic aspects of the problem.
In this paper, we establish a connection between the combinatorial
analysis of the number of convex chains and the zeros of Riemann's
zeta function.

\begin{theorem}
\label{thm:formula}
Let $p(n)$ denote the number of lattice convex chains with endpoint $(n,n)$. 
As $n$ goes to infinity, 
\begin{equation}\label{eq:equivalent}
    p(n)
    \sim
    \frac{e^{-2\zeta'(-1)}}{(2\pi)^{7/6} \sqrt{3} \kappa^{1/18} n^{17/18}}
    \exp\left[
      3\kappa^{1/3} n^{2/3}
      + I_{\mathrm {crit}}\left(\left(\frac\kappa n\right)^{1/3}\right)
    \right],
\end{equation}
where $\kappa = \zeta(3)/\zeta(2)$ and where the function
$I_{\mathrm {crit}}$ will be defined later by equation \eqref{eq:H_int}.

Moreover, under the assumption that the zeros $\rho$ of the Riemann
zeta function inside the critical strip $0 < \Re(\rho) < 1$ are simple, this function can
be expressed as
\begin{equation*}
  I_{\mathrm {crit}}(\beta)
  =
  \sum_{\rho} \frac{\Gamma(\rho)\zeta(\rho+1)\zeta(\rho-1)}{\zeta'(\rho)\beta^\rho},
\end{equation*}
where the precise meaning of the series will be given  by \eqref{eq:H_sum}.
\end{theorem}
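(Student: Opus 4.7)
My plan is to follow Sinai's probabilistic approach. A convex chain is uniquely determined by specifying, for each primitive vector $\mathbf v=(v_1,v_2)\in\Prim$ in the first quadrant, a nonnegative integer length $L_{\mathbf v}$, with the chain obtained by concatenating the segments $L_{\mathbf v}\mathbf v$ in order of increasing slope. For parameters $\beta,\lambda>0$ introduce the grand-canonical Gibbs measure $\PPbl$ under which the $(L_{\mathbf v})_{\mathbf v\in\Prim}$ are independent geometric variables with $\PPbl(L_{\mathbf v}=\ell)\propto e^{-(\beta v_1+\lambda v_2)\ell}$; the partition function factorises as
\[
\Zbl=\prod_{\mathbf v\in\Prim}\frac{1}{1-e^{-\beta v_1-\lambda v_2}},
\]
and every chain with endpoint $(n,n)$ carries the same weight $e^{-(\beta+\lambda)n}/\Zbl$, yielding the exact identity
\[
p(n)=\Zbl\,e^{(\beta+\lambda)n}\,\PPbl(X_1=X_2=n),\qquad X_i:=\sum_{\mathbf v\in\Prim}L_{\mathbf v}v_i.
\]
I would evaluate this identity at the critical parameters $\beta=\lambda=\beta_n$ determined by the saddle conditions $\EEbl[X_1]=\EEbl[X_2]=n$, which asymptotically satisfy $\beta_n\sim(\kappa/n)^{1/3}$.

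The core of the argument is a precise expansion of $\log Z(\beta,\beta)$ via Mellin--Barnes. Combining the identity $-\log(1-e^{-x})=\tfrac{1}{2\pi i}\int_{(c)}\Gamma(s)\zeta(s+1)x^{-s}\,ds$ with the Dirichlet series
\[
\sum_{\mathbf v\in\Prim}(v_1+v_2)^{-s}=1+\frac{\zeta(s-1)}{\zeta(s)},
\]
obtained by Möbius inversion from $\sum_{a,b\geq1}(a+b)^{-s}=\zeta(s-1)-\zeta(s)$ plus the contributions of the axial vectors $(1,0),(0,1)$, gives a representation of $\log Z(\beta,\beta)$ as a contour integral whose integrand is essentially $\Gamma(s)\zeta(s+1)\zeta(s-1)\beta^{-s}/\zeta(s)$. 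I would then shift the contour leftward, collecting: (i) a simple pole at $s=2$ producing $\kappa\beta^{-2}$, which combined with $e^{2\beta_n n}$ gives the exponent $3\kappa^{1/3}n^{2/3}$; (ii) a double pole at $s=0$ and residues near $s=1,-1$, whose evaluation involves $\zeta'(0)=-\tfrac12\log(2\pi)$ and $\zeta'(-1)$ and supplies the constants $(2\pi)^{7/6}$ and $e^{-2\zeta'(-1)}$ of the prefactor; and (iii) the remaining integral along a vertical line inside the critical strip, which I take as the \emph{definition} of $I_{\mathrm{crit}}(\beta)$ in~\eqref{eq:H_int}. To pass from $\Zbl$ to $p(n)$ I then apply a two-dimensional local central limit theorem to $(X_1,X_2)$ at its mean $(n,n)$---precisely the ingredient furnished by the Bogachev--Zarbaliev proposition cited in the introduction---which produces the Gaussian factor $(2\pi\sqrt{\det\Sigma_n})^{-1}$; the $n$-dependence of $\det\Sigma_n$, computable from second derivatives of $\log Z$ by the same Mellin analysis, combines with the earlier residues to yield exactly the claimed $n^{-17/18}$.

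For the sum representation of $I_{\mathrm{crit}}$, I would continue shifting the Mellin contour further to the left across the critical strip. Under the hypothesis that the nontrivial zeros $\rho$ of $\zeta$ are simple, the only new singularities of the integrand inside the strip are the simple poles of $1/\zeta(s)$ at these zeros, each of which contributes the residue $\Gamma(\rho)\zeta(\rho+1)\zeta(\rho-1)/(\zeta'(\rho)\beta^\rho)$ predicted by the theorem. The main obstacle, and where I expect the technical work to concentrate, is the analytic control of $1/\zeta(s)$ inside the critical strip: closing the contour requires estimates on $1/\zeta(\sigma+it)$ on horizontal lines $|t|=T_k\to\infty$ chosen to avoid clustering of zeros (via the classical bound $|1/\zeta(\sigma+it)|\ll t^{\varepsilon}$ between zeros combined with Valiron-type ordinate-gap estimates), so that the horizontal tails vanish in the limit. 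The resulting series over $\rho$ is only conditionally convergent, and the ``precise meaning'' announced in \eqref{eq:H_sum} is as the limit of symmetric truncations $\sum_{|\Im\rho|\leq T_k}$.
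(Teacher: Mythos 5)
Your overall architecture is exactly the paper's: Sinai's Boltzmann measure with the identity $p(n)=Z(\beta,\beta)e^{2\beta n}\,\PP_\bbeta[X_1=X_2=n]$, the Mellin--Barnes representation of $\log Z(\beta,\beta)$ with integrand $\Gamma(s)\zeta(s+1)(\zeta(s-1)+\zeta(s))/(\zeta(s)\beta^s)$ (your \Moebius{}-inversion computation of $\sum_{\bv\in\Prim}(v_1+v_2)^{-s}=1+\zeta(s-1)/\zeta(s)$ agrees with \eqref{eq:chi_manip}), calibration of $\beta$ by the saddle condition, the Bogachev--Zarbaliev local limit theorem, and the sum over zeros obtained by residues with Valiron-type horizontal lines $|\Im s|=T_k$ and symmetric truncation, as in \eqref{eq:H_sum}. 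Two points, however, need repair. First, and most substantively, you propose to define $I_{\mathrm{crit}}$ as ``the remaining integral along a vertical line inside the critical strip.'' Taken literally this step fails unconditionally: on a fixed vertical line $\Re s=\sigma_0$ with $0<\sigma_0<1$ there is no unconditional control of $1/\zeta$ (the critical line itself carries zeros, and without RH zeros may lie on or arbitrarily close to any such line), so you can neither justify convergence of the integral nor prove the bounds $I_{\mathrm{crit}}^{(k)}(\beta)=o(\beta^{-k-1})$ that the argument needs in two places: to solve the calibration equation to the precision of \eqref{eq:sf_beta}, and to replace the exact saddle point $\beta_n$ by $(\kappa/n)^{1/3}$ inside $I_{\mathrm{crit}}$ with only $o(1)$ error, which is what puts $(\kappa/n)^{1/3}$ into \eqref{eq:equivalent}. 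The paper's device is different: $I_{\mathrm{crit}}$ in \eqref{eq:H_int} is the integral over the two curves $\Re s=1-A/\log(2+|\Im s|)$ and $\Re s=A/\log(2+|\Im s|)$ hugging the edges of the strip inside the classical zero-free region, where $1/\zeta(s)=O(\log|\Im s|)$ is available, while the genuinely vertical leftover line is pushed to $\Re s=-\tfrac12$, where $1/\zeta$ is bounded, yielding the harmless term $I_{\mathrm{err}}(\beta)=O(\beta^{1/2})$. Note also the internal inconsistency in your description: if the leftover line stayed inside the strip you would not yet have crossed $s=0$, whose double pole you (correctly) intend to collect.

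Second, a smaller bookkeeping slip: there are no ``residues near $s=1,-1$.'' The potential pole at $s=1$ (from the $\zeta(s)$ summand in the numerator) is cancelled by the $\zeta(s)$ in the denominator, and $s=-1$ is never crossed when one stops at $\Re s=-\tfrac12$ (crossing it would only add an $O(\beta)$ term). All the prefactor information comes from the single residue at $s=2$, namely $\kappa\beta^{-2}$, and from the \emph{double} pole at $s=0$, whose residue is $\tfrac76\log\tfrac1\beta-2\zeta'(-1)-\tfrac16\log(2\pi)$; it is this Laurent computation, combined with \eqref{eq:sf_beta} and the local limit factor, that produces the exponent $n^{-17/18}$ and the constant $e^{-2\zeta'(-1)}(2\pi)^{-7/6}3^{-1/2}\kappa^{-1/18}$ of \eqref{eq:equivalent}.
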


A straightforward corollary of this theorem is that,
if Riemann's Hypothesis was to hold, the oscillatory term
$I_{\mathrm{crit}}((\kappa/n)^{1/3})$ inside the exponential would be
roughly of order $n^{1/6}$.
The two statements are actually equivalent, as we will show in Subsection~\ref{ssec:second-theorem}:

\begin{theorem}
\label{thm:rh}
The condition
\begin{equation}\label{eq:H}\tag{H}
\forall \epsilon>0, \quad \log p(n) =  3\kappa^{\frac13} n^{\frac23}+O(n^{\frac16+\epsilon})
\end{equation}
holds if and only if Riemann's Hypothesis does.
\end{theorem}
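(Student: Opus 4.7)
The plan is to read Theorem~\ref{thm:rh} off the fine asymptotics of Theorem~\ref{thm:formula}. Taking logarithms of~\eqref{eq:equivalent} and substituting $\beta=(\kappa/n)^{1/3}$, so that $n^{1/6}\asymp\beta^{-1/2}$, condition~\eqref{eq:H} becomes exactly equivalent to
$$I_{\mathrm{crit}}(\beta) = O_\eta\bigl(\beta^{-1/2-\eta}\bigr) \quad\text{as } \beta\to 0^+, \text{ for every } \eta>0. \qquad (\star)$$
Thus Theorem~\ref{thm:rh} reduces to $(\star)\Leftrightarrow\text{RH}$, and both directions I would approach through the Mellin--Barnes representation~\eqref{eq:H_int} of $I_{\mathrm{crit}}$, the key point being that the integrand's poles inside the critical strip are exactly the non-trivial zeros of $\zeta$.

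For the implication $\mathrm{RH}\Rightarrow(\star)$, I would shift the contour in~\eqref{eq:H_int} leftward past the critical line to some abscissa $\Re(s)=1/2-\delta$. Under RH every pole crossed lies on $\Re(s)=1/2$ and contributes a residue of modulus at most $\beta^{-1/2}$ times a Gamma-decaying weight. Combining the Stirling bound $|\Gamma(1/2+i\gamma)|\asymp|\gamma|^{-1/2}e^{-\pi|\gamma|/2}$, polynomial bounds on $\zeta(\rho\pm 1)$, the Riemann--von Mangoldt counting of zeros, and classical estimates for $1/\zeta$ near its zeros, the sum of residues converges absolutely with total mass $O(\beta^{-1/2})$, while the shifted integral is $O(\beta^{-1/2+\delta})$. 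This yields the sharper statement $I_{\mathrm{crit}}(\beta)=O(\beta^{-1/2})$, which in particular implies~$(\star)$.

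For the converse $(\star)\Rightarrow\mathrm{RH}$ I would argue by contraposition. Suppose some non-trivial zero $\rho_0$ has real part $\theta_0>1/2$, and pick $\eta>0$ with $1/2+\eta<\theta_0$. A Landau-type argument applied to a smoothed Mellin transform of $I_{\mathrm{crit}}$ would translate the pointwise bound $(\star)$ into an analytic-continuation statement: it forces the Mellin transform of $I_{\mathrm{crit}}$ (and hence, up to known regular pieces, the factor $G(s):=\Gamma(s)\zeta(s+1)\zeta(s-1)/\zeta(s)$ appearing in~\eqref{eq:H_int}) to be holomorphic throughout $\Re(s)>1/2+\eta$. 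Since $\rho_0$ is an actual pole of $G$ in that half-plane, this is the desired contradiction, so $\theta_0\le 1/2$ and RH holds.

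The hard part is this converse direction: the contributions to $I_{\mathrm{crit}}$ from zeros on a line $\Re(s)=\theta_0>1/2$ have modulus $\beta^{-\theta_0}$ but rapidly oscillating phase $\beta^{-i\Im(\rho_0)}$, so a pointwise bound on $I_{\mathrm{crit}}(\beta)$ does not \emph{a priori} exclude their presence through cancellation. The Mellin/Landau step is designed precisely to convert the pointwise estimate into a holomorphy statement that is insensitive to such oscillations; the only remaining technical chore is to verify the mild continuity and moderate-growth hypotheses on $I_{\mathrm{crit}}$ required to deploy it directly on the integral representation~\eqref{eq:H_int}.
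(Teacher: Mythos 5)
Your converse direction is in substance the paper's argument: a pointwise bound on the oscillatory term is turned, via a Mellin transform, into a holomorphic continuation statement that is incompatible with a zero of $\zeta$ of real part $>\frac12$, and the functional equation finishes. Two remarks, though. First, the paper works with $\log Z(\beta,\beta)$ rather than with $I_{\mathrm{crit}}$: its Mellin transform is \emph{exactly} $\Gamma(s)\zeta(s+1)(\zeta(s-1)+\zeta(s))/\zeta(s)$, the function decays exponentially as $\beta\to\infty$, and after subtracting $\kappa\beta^{-2}$ the bound (H') makes $\int_0^1+\int_1^\infty$ holomorphic in $\Re(s)>\frac12$ — no ``Landau-type'' theorem is needed (nor available: Landau's lemma requires a sign condition that the oscillating $I_{\mathrm{crit}}$ does not satisfy), only uniqueness of analytic continuation. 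Your route through the Mellin transform of $I_{\mathrm{crit}}$ itself would additionally force you to control $I_{\mathrm{crit}}(\beta)$ as $\beta\to\infty$ (it decays only polynomially, at a rate set by the contour $\gamma$) and to identify its transform ``up to known regular pieces''; this is exactly the chore the paper's choice of $\log Z$ avoids. Second, condition \eqref{eq:H} only bounds $I_{\mathrm{crit}}$ along the discrete sequence $\beta_n\approx(\kappa/n)^{1/3}$; your claim that \eqref{eq:H} is ``exactly equivalent'' to the continuous bound $(\star)$ silently uses an interpolation between consecutive $\beta_n$, which the paper carries out explicitly via $I_{\mathrm{crit}}'(\beta)=o(\beta^{-2})$ from Lemma~\ref{lem:expansion}\,(i). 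This is minor but should be said.

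The genuine gap is in your forward direction. You propose to shift the contour past the critical line to $\Re(s)=\frac12-\delta$ and to bound the resulting residue sum over the zeros absolutely, invoking ``classical estimates for $1/\zeta$ near its zeros.'' No such estimates exist: RH does not imply that the zeros are simple, and even granting simplicity there is no known lower bound on $|\zeta'(\rho)|$ (conditional or not), so the individual residues $\Gamma(\rho)\zeta(\rho+1)\zeta(\rho-1)/(\zeta'(\rho)\beta^{\rho})$ cannot be bounded term by term and the absolute convergence you assert is unsupported; this is precisely why the paper states \eqref{eq:H_sum} only as a symmetric limit along the special Valiron heights $T_k$ and under an extra simplicity assumption. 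The fix is the paper's much simpler move: under RH there are \emph{no} poles of the integrand strictly to the right of the critical line, so $\gamma_{\mathrm{right}}$ can be pushed to a vertical line $\Re(s)=\frac12+\delta$ without crossing anything, and the shifted integral is bounded directly using the RH-conditional estimate $1/\zeta(\sigma+it)=O(|t|^{\epsilon})$ for $\sigma\ge\frac12+\delta$ together with the exponential decay of $\Gamma$, giving $I_{\mathrm{crit}}(\beta)=O(\beta^{-\frac12-\epsilon})$ and hence \eqref{eq:H}. Your sharper claim $O(\beta^{-1/2})$ is not needed and, by the above, not justified by the argument you sketch.
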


In \cite{bodini_asymptotic_2013}, Bodini, Duchon, Jacquot and Mutafchiev
presented a precise asymptotic analysis of digitally convex polyominoes,
based on generating functions and on the saddle point
method. This problem turns out to be strongly related to the enumeration
of convex chains. In the last section, we show how a slight modification
of Sinai's model makes it possible to enumerate polyominoes.

\section{A statistical mechanical model}
\label{sec:correspondence}

We start this paper by reminding the correspondence between lattice
convex chains and non negative integer-valued  functions on the set of
pairs of coprime positive integers.

Let $\Prim$ be the set of primitive vectors, that is to say, the set of
all vectors $(x,y)$ whose coordinates are coprime positive integers,
including the pairs $(0,1)$ and $(1,0)$.
As already used in \Jarnik{}~\cite{MR1544776}, the space of lattice
increasing convex chains starting from the origin is in one-to-one
correspondence with the space \(\Omega\) of nonnegative integer-valued
functions $\omega : \Prim \to \Z_+$ with finite support (that is to say,
\(\omega(\bv)\neq 0\) only for finitely many \(\bv \in \Prim\)):
\begin{itemize}
  \item The function $\omega$ associated to the convex chain
    $(x_i,y_i)_{0 \leq i \leq k}$ is defined for all $\bv \in \Prim$
    by $\omega(\bv) = \gcd(x_{i+1}-x_i,y_{i+1}-y_i)$ if there exists
    $i \in \{0,\dots,k\}$ such that $(x_{i+1}-x_i,y_{i+1}-y_i)$ is
    proportional to $\bv$, and $\omega(\bv) = 0$ otherwise.

  \item The inverse map is obtained by adding up the vectors
  $\omega(\bv)\,\bv$ by increasing slope order.
  In particular, the endpoint of the chain is equal to
  $$
    \sum_{\bv \in \Prim} \omega(\bv)\,\bv.
  $$
\end{itemize}

\subsection{Description of Sinai's model and overall strategy}

We endow the space $\Omega$ with Boltzmann-like probability
measures $\PP_\bbeta$ depending on two parameters $\bbeta = (\beta_1,\beta_2) \in
(0,+\infty)^2$ and characterized by the condition that the random variables
$(\omega(\bv))_{\bv \in \Prim}$ are independent and geometrically
distributed with parameter $e^{- \bbeta \cdot \bv}$, respectively.
In this setting,
\[
    \sum_{\bv \in \Prim} \PP_\bbeta[\omega(\bv) \neq 0] < \infty,
\]
hence, by the Borel--Cantelli lemma,  the function $\omega$ has almost surely finite support and we can write
\[
    \PP_\bbeta(\omega)
    = \prod_{\bv \in \Prim}\left(1-e^{-\bbeta \cdot \bv}\right)e^{-\omega(\bv) \bbeta \cdot \bv}
    = \frac{1}{Z(\bbeta)} \exp\left(- \bbeta \cdot \sum_{\bv \in \Prim} \omega(\bv)\bv\right),
\]
where the \emph{partition function} $Z(\bbeta)$ is given by
\begin{equation}
	\label{eq:partition_function}
    Z(\bbeta) = \prod_{\bv \in \Prim} \left(1 - e^{-\bbeta \cdot \bv}\right)^{-1}
\end{equation}
(the ``$\cdot$" notation denotes the canonical inner product of $\RR^2$).

Since, as noticed above, $\mathbf X(\omega) = \sum_{\bv \in \Prim}
\omega(\bv)\bv$ is the endpoint of the chain corresponding to $\omega$, and $\PP_\bbeta(\omega)$ is 
proportional to $\exp(-\beta \cdot\mathbf X(\omega))$,
the conditional distribution induced by $\PP_\bbeta$
on the set of convex chains ending at $\bn = (n_1,n_2)$ is the uniform
distribution on this latter set. 
By the same argument, we obtain the following formula which will be instrumental
in the proof:
\begin{equation}\label{eq:link}
    \PP_\bbeta\left[\mathbf X = \bn\right] = p(\bn) \frac{e^{-\bbeta\cdot \bn}}{Z(\bbeta)}.
\end{equation}

In order to get a formula for $ p(\bn)$, our strategy is to choose the
two parameters so that $\EE_\bbeta[\mathbf X] = \bn$.
This will indeed lead to an asymptotic equality of $\PP_\bbeta[\mathbf
X = \bn]$ due to a local limit result.
Together with the analysis of the partition function, this local limit
result will constitute the key of the proof.

\subsection{Integral representation of the partition function}

It turns out that the Mellin inversion formula leads to an exact
integral representation of the logarithmic partition function $\log Z$
of Sinai's model in terms of the Euler $\Gamma$ function, the Riemann
$\zeta$ function, and the modified Barnes zeta function
$$
  \chi(s;\bbeta):=
  \sum_{\bv \in \Z_+^2\setminus\{0\}}\frac{1}{(\bbeta \cdot \bv)^s},
  \qquad \Re(s) > 2.
$$
\begin{lemma}\label{lem:logZ}
    For all $\bbeta = (\beta_1,\beta_2) \in (0,+\infty)^2$,
    \begin{equation}
        \label{eq:logZ}
        \log Z(\bbeta)
        =
        \frac{1}{2i\pi} \int_{3-i\infty}^{3+i\infty} \frac{\Gamma(s) \zeta(s+1)}{\zeta(s)}\chi(s;\bbeta)\,ds.
    \end{equation}
   
\end{lemma}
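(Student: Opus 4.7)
The plan is to expand the product defining $Z(\bbeta)$ and apply the Mellin inversion formula to the building block $-\log(1-e^{-x})$. First, for $\Re(s)>1$, the standard termwise computation
\[
  \int_0^\infty x^{s-1}(-\log(1-e^{-x}))\,dx = \sum_{k\geq 1}\frac{1}{k}\int_0^\infty x^{s-1}e^{-kx}\,dx = \Gamma(s)\zeta(s+1)
\]
identifies the Mellin transform of $-\log(1-e^{-x})$. Mellin inversion then gives, for any $c>1$ and $x>0$,
\[
  -\log(1-e^{-x}) = \frac{1}{2i\pi}\int_{c-i\infty}^{c+i\infty}\Gamma(s)\zeta(s+1)\,x^{-s}\,ds.
\]
Taking $\log$ of \eqref{eq:partition_function}, specialising this identity to $x=\bbeta\cdot\bv$ for each $\bv\in\Prim$, and summing produces the representation
\[
  \log Z(\bbeta) = \sum_{\bv\in\Prim}\frac{1}{2i\pi}\int_{c-i\infty}^{c+i\infty}\Gamma(s)\zeta(s+1)(\bbeta\cdot\bv)^{-s}\,ds.
\]

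Next I would move the sum inside the integral. Choosing the abscissa $c=3$ is convenient: the factor $|\Gamma(s)|$ decays exponentially as $|\Im(s)|\to\infty$ along the vertical line $\Re(s)=3$, while $|\zeta(s+1)|\leq\zeta(4)$ there, producing an integrable majorant in $t=\Im(s)$; in the $\bv$-variable the sum $\sum_{\bv\in\Prim}(\bbeta\cdot\bv)^{-3}$ is bounded by $\chi(3;\bbeta)<\infty$. Fubini's theorem then yields
\[
  \log Z(\bbeta) = \frac{1}{2i\pi}\int_{3-i\infty}^{3+i\infty}\Gamma(s)\zeta(s+1)\,\Bigl(\sum_{\bv\in\Prim}(\bbeta\cdot\bv)^{-s}\Bigr)\,ds.
\]

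To identify the inner sum, I would use the unique decomposition $\bw=k\bv$ with $k\geq 1$ and $\bv\in\Prim$ of any non-zero $\bw\in\Z_+^2$, which gives the factorisation
\[
  \chi(s;\bbeta) = \sum_{k\geq 1}\sum_{\bv\in\Prim}(k\,\bbeta\cdot\bv)^{-s} = \zeta(s)\sum_{\bv\in\Prim}(\bbeta\cdot\bv)^{-s}\qquad(\Re(s)>2).
\]
Since $\zeta$ does not vanish on $\Re(s)>2$ (via the Euler product), one may divide by $\zeta(s)$ and substitute to obtain \eqref{eq:logZ}.

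The only genuine obstacle is the Fubini step in the second paragraph; the choice $c=3$ is precisely what makes both the $\Im(s)$-integrability and the $\bv$-summability straightforward. Pushing the contour further to the left would force us to contend with the pole of $\chi(s;\bbeta)$ at $s=2$ and with the more delicate behaviour of $\Gamma(s)\zeta(s+1)/\zeta(s)$ near the critical strip—exactly the structure that the subsequent asymptotic analysis leading to $I_{\mathrm{crit}}$ will have to exploit.
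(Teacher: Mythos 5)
Your proof is correct and follows essentially the same route as the paper: Mellin inversion on the vertical line $\Re(s)=3$, Fubini to exchange the sum over $\Prim$ with the contour integral, and the factorization $\chi(s;\bbeta)=\zeta(s)\sum_{\bv\in\Prim}(\bbeta\cdot\bv)^{-s}$ coming from the partition of $\Z_+^2\setminus\{0\}$ into multiples of primitive vectors. The only cosmetic difference is that you package the Mellin pair $-\log(1-e^{-x})\leftrightarrow\Gamma(s)\zeta(s+1)$ up front, whereas the paper first Taylor-expands the logarithm and applies inversion to $e^{-z}$, leaving the sum over $m$ to be absorbed into $\zeta(s+1)$ during the Fubini step.
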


\begin{proof}
    We first take logarithms in \eqref{eq:partition_function} and then expand in Taylor series, 
    \[
        \log Z(\bbeta)
        = -\sum_{\bv \in \Prim} \log\left(1-e^{-\bbeta \cdot \bv}\right)
        = \sum_{\bv \in \Prim}\sum_{m \geq 1} \frac{1}{m} e^{-m \bbeta \cdot \bv}.
    \]

    Now, we make use of the Mellin inversion formula
    \[
        e^{-z} = \frac{1}{2i\pi}\int_{c-i\infty}^{c+i\infty} \Gamma(s) z^{-s} ds,
    \]
    which holds for every real numbers $z>0$ and $c > 0$.
    For all $c>0$, we know, from the exponential decrease of $\Gamma$,
    that its integral on $c+i\RR$ is absolutely convergent.
    Taking $c > 2$ in order to ensure that $\sup_{s \in c +i\RR}
    \sum_{m\geq1} \left| \frac{1}{m^{s+1}}\right| < \infty$ and
      $\sup_{s \in c+i\RR}
      \sum_{\bv\in\Prim}\left|\frac{1}{(\bbeta\cdot\bv)^{s}}\right| <
      \infty$, we can apply Fubini's theorem and write
    \begin{align*}
        \log Z(\bbeta) & = \frac{1}{2i\pi}\int_{c-i\infty}^{c+i\infty} \sum_{\bv \in \Prim} \sum_{m \geq 1} \frac{1}{m}\frac{\Gamma(s)}{(m\bbeta \cdot \bv)^s}ds\\
       & = \frac{1}{2i\pi}\int_{c-i\infty}^{c+i\infty} \zeta(s+1)\Gamma(s)\sum_{\bv \in \Prim} \frac{1}{(\bbeta \cdot \bv)^s}ds.
   \end{align*}

   To conclude the proof of \eqref{eq:logZ}, notice that the partition $\Z_+^2\setminus\{0\}
   = \bigsqcup_{d\ge 1} (d\Prim)$ translates into the identity of
   Dirichlet series
   \[
      \sum_{\bv \in \Z_+^2\setminus\{0\}} \frac{1}{(\bbeta \cdot \bv)^s} = \sum_{d \geq 1}\frac{1}{d^s}\sum_{\bv \in \Prim} \frac{1}{(\bbeta \cdot \bv)^s}.\qedhere
   \]
\end{proof}

Our main contribution is to exploit this integral representation in
order to push further the estimates of $\log Z$, leading to a precise
estimate of $p(n,n)$.  As Sinai does in his paper, we could also get
precise asymptotics for $p(n,cn)$, but for the sake of readability of
the paper, we limit our scope to $p(n,n)$, which leads to a choice of
equal parameters $\beta_1=\beta_2=\beta$.
In this case, we observe by elementary manipulations, that for $\Re(s) > 2$,
\begin{equation}\label{eq:chi_manip}
  \chi(s;(\beta,\beta))
   = \sum_{\bv \in \Z_+^2\setminus\{0\}} \frac{1}{(\beta v_1 + \beta v_2)^s}
   = \sum_{n \geq 1} \frac{n+1}{\beta^s n^s}\\
   = \frac{\zeta(s-1)+\zeta(s)}{\beta^s}.
\end{equation}
Therefore, equation \eqref{eq:logZ} becomes
\begin{equation}\label{eq:logZbb}
  \log Z(\beta,\beta)
  =
  \frac{1}{2i\pi} \int_{3-i\infty}^{3+i\infty} \frac{\Gamma(s) \zeta(s+1)(\zeta(s-1)+\zeta(s))}{\zeta(s)\beta^s}\,ds.
\end{equation}

\section{Analysis of the partition function}
We start by giving the first order estimates of $\log Z(\bbeta)$ and of its partial derivatives. For this purpose, we make use of the meromorphic continuation of the modified Barnes zeta function $\chi$, which is detailed in the Appendix.

\begin{lemma}\label{lem:cumulants}
For all nonnegative integers $k_1, k_2$, for all $\epsilon>0$, and all
$\bbeta=(\beta_1, \beta_2)\in (0,+\infty)^2$, such that
$\epsilon < \frac{\beta_1}{\beta_2}<\frac1{\epsilon}$,
\[
  \frac{\partial^{k_1+k_2}}{\partial\beta_1^{k_1}\partial\beta_2^{k_2}}\log Z(\beta_1, \beta_2)
  \underset{\bbeta\to0}\sim
  (-1)^{k_1+k_2}\frac{\zeta(3)}{\zeta(2)}\frac{k_1!k_2!}{\beta_1^{k_1+1}\beta_2^{k_2+1}}.
\]
\end{lemma}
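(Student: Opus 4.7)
I would differentiate the Mellin representation of Lemma~\ref{lem:logZ} directly in $\bbeta$ and then shift the contour past the leading pole of the resulting integrand to read off the announced equivalent. For $\Re(s)>2$, the series defining $\chi(s;\bbeta)$ may be differentiated termwise, producing
\[
  \frac{\partial^{k_1+k_2}}{\partial\beta_1^{k_1}\partial\beta_2^{k_2}}\chi(s;\bbeta)
  =(-1)^{k_1+k_2}\frac{\Gamma(s+k_1+k_2)}{\Gamma(s)}\,\chi_{k_1,k_2}(s+k_1+k_2;\bbeta),
\]
where $\chi_{k_1,k_2}(u;\bbeta):=\sum_{\bv\in\Z_+^2\setminus\{0\}}v_1^{k_1}v_2^{k_2}(\bbeta\cdot\bv)^{-u}$. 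Differentiating under the integral in Lemma~\ref{lem:logZ} (justified by the exponential decay of $\Gamma$ on vertical lines) and substituting $u=s+k_1+k_2$ therefore yield, for $c>k_1+k_2+2$,
\[
  \frac{\partial^{k_1+k_2}\log Z(\bbeta)}{\partial\beta_1^{k_1}\partial\beta_2^{k_2}}
  =\frac{(-1)^{k_1+k_2}}{2i\pi}\int_{c-i\infty}^{c+i\infty}\frac{\Gamma(u)\,\zeta(u-k_1-k_2+1)}{\zeta(u-k_1-k_2)}\,\chi_{k_1,k_2}(u;\bbeta)\,du.
\]

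The rightmost pole of this integrand in the half-plane $\Re(u)>k_1+k_2+1$ is the simple pole of $\chi_{k_1,k_2}$ at $u=k_1+k_2+2$: in that region $\Gamma$ is regular, $\zeta(u-k_1-k_2)$ has no zero (its argument has real part $>1$), and $\zeta(u-k_1-k_2+1)$ would only contribute a pole further left at $u=k_1+k_2$. Comparing $\chi_{k_1,k_2}(u;\bbeta)$ with the integral $\iint_{\RR_+^2}x^{k_1}y^{k_2}(\beta_1 x+\beta_2 y)^{-u}\,dx\,dy$ through Euler--Maclaurin, then performing the substitution $\xi=\beta_1 x$, $\eta=\beta_2 y$ followed by $r=\xi+\eta$, $t=\xi/r$, the scalar factor is produced by the Beta integral $B(k_1+1,k_2+1)$ and gives
\[
  \mathrm{Res}_{u=k_1+k_2+2}\,\chi_{k_1,k_2}(u;\bbeta)
  =\frac{k_1!\,k_2!}{(k_1+k_2+1)!}\cdot\frac{1}{\beta_1^{k_1+1}\beta_2^{k_2+1}}.
\]
Multiplying by $\Gamma(k_1+k_2+2)=(k_1+k_2+1)!$ and $\zeta(3)/\zeta(2)$ and carrying the sign $(-1)^{k_1+k_2}$ recovers exactly the announced main term $(-1)^{k_1+k_2}\tfrac{\zeta(3)}{\zeta(2)}\tfrac{k_1!\,k_2!}{\beta_1^{k_1+1}\beta_2^{k_2+1}}$.

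Finally, moving the contour from $\Re(u)=c$ to $\Re(u)=c'\in(k_1+k_2+1,k_1+k_2+2)$, the integral on the new contour is $O(\bbeta^{-c'})$, because $\chi_{k_1,k_2}(u;\bbeta)$ scales like $\bbeta^{-\Re(u)}$ (up to a function of $\beta_1/\beta_2$ which is bounded by the ratio hypothesis) and $\Gamma(u)$ provides the rapid decay in $\Im(u)$ needed to close the contour. Since $c'<k_1+k_2+2$, this remainder is $o(\bbeta^{-(k_1+k_2+2)})$, which is enough to promote the residue into an asymptotic equivalent. The principal obstacle is precisely establishing the meromorphic continuation and the uniform vertical-line bounds for $\chi_{k_1,k_2}$ under the constraint $\epsilon<\beta_1/\beta_2<1/\epsilon$; I would either run the Appendix's analysis of $\chi$ directly on the weighted sum, or transfer the existing bounds by exploiting the identity above relating $\chi_{k_1,k_2}(\,\cdot\,;\bbeta)$ to $\bbeta$-derivatives of $\chi$.
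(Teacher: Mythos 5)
Your proposal is correct and is essentially the paper's own argument run in the opposite order: the paper first shifts the contour in \eqref{eq:logZ} to $\Re(s)=1+\delta$, picks up the residue $\tfrac{\zeta(3)}{\zeta(2)}\tfrac{1}{\beta_1\beta_2}$ at $s=2$, and then differentiates (the main term coming from differentiating that residue), whereas you differentiate first and then shift, so your residue at $u=k_1+k_2+2$ is the same computation. The one ingredient you flag as missing --- meromorphic continuation and uniform vertical-line bounds for $\chi_{k_1,k_2}$ in the cone $\epsilon<\beta_1/\beta_2<1/\epsilon$ --- is exactly what Corollary~\ref{Barnes} of the Appendix provides, transferred through your identity $\chi_{k_1,k_2}(s+k_1+k_2;\bbeta)=(-1)^{k_1+k_2}\frac{\Gamma(s)}{\Gamma(s+k_1+k_2)}\,\partial^{k_1+k_2}_{\beta_1^{k_1}\beta_2^{k_2}}\chi(s;\bbeta)$, so your second suggested route closes the gap and the two proofs coincide in substance.
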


\begin{proof}

We apply formula \eqref{eq:logZ} and shift the line of integration to the left by using the residue theorem.
Note that, by Corollary~\ref{Barnes} of the Appendix, all
the integrated functions can be meromorphically continued, that the
only pole in the region $\Re(s)\geq1$ lies at $s=2$, and that for all
$\delta\in(0,1)$,
\[
  \log Z(\bbeta)
  =
  \frac{\zeta(3)}{\zeta(2)}\frac1{\beta_1\beta_2}+\frac{1}{2i\pi} \int_{1+\delta-i\infty}^{1+\delta+i\infty} \frac{\Gamma(s) \zeta(s+1)}{\zeta(s)}\chi(s;\bbeta)\,ds.
\]

We can take the iterated derivatives formally in the previous
equality, since we control the derivatives of $\chi(s;\bbeta)$ by
Corollary~\ref{Barnes}, and since both functions $\zeta(s+1)$ and
$1/\zeta(s)$ are bounded on the line $1+\delta+i\RR$.
\end{proof}

In the special case $\beta_1=\beta_2=\beta$, we derive a much more
precise asymptotic series expansion of $\log Z(\beta, \beta)$ and of
its derivatives in $\beta$.
The proof is based on formula~\eqref{eq:logZbb} and on the residue
theorem again.

\begin{lemma}
\label{lem:expansion}
Let $A$ be a positive number such that the contour $\gamma$ defined
below surrounds all the zeros of $\zeta$ inside the critical strip.

The contour $\gamma$ is defined as the union of the following oriented
paths: on the right side the curve $\gamma_{\mathrm{right}}(t) = 1 -
\frac{A}{\log (2 + |t|)} + it$ for $t$ going from $-\infty$ to $+\infty$
and on the left side the curve  $\gamma_{\mathrm{left}}(t) = \frac{A}{\log
(2 + |t|)} + it$ for $t$ going from $+\infty$ to $-\infty$
(see Figure~\ref{fig:contours}).

We define the functions $I_{\mathrm{crit}}$ and $I_{\mathrm{err}}$
respectively by
\begin{equation}\label{eq:H_int}
    I_{\mathrm{crit}}(\beta) = \frac{1}{2i\pi} \int_\gamma \frac{\Gamma(s)\zeta(s+1)(\zeta(s-1)+\zeta(s))}{\zeta(s)\beta^s}ds
\end{equation}
and
\[
    I_{\mathrm{err}}(\beta) = \frac{1}{2i\pi} \int_{-\frac{1}{2} -i\infty}^{-\frac{1}{2}+i\infty} \frac{\Gamma(s)\zeta(s+1)(\zeta(s-1)+\zeta(s))}{\zeta(s)\beta^s}ds.
\]

\begin{enumerate}[(i)]
  \item For all $k\geq0$, when $\beta$ goes to 0, the $k$-th derivative
  of $I_{\mathrm{crit}}(\beta)$ is $o(\beta^{-k-1})$.

  \item When $\beta$ goes to 0, the function $I_{\mathrm{err}}(\beta)$
is of order $O(\beta^{1/2})$ and $I_{\mathrm{err}}'(\beta) = o(1/\beta)$.
 
 \item For all $\beta>0$, 
  \begin{equation}
      \label{eq:sf_log}
      \log Z(\beta, \beta) = \frac{\zeta(3)}{\zeta(2)}\frac{1}{\beta^2} +
      I_{\mathrm{crit}}(\beta) +
      \frac{7}{6} \log \frac{1}{\beta} + C  + I_{\mathrm{err}}(\beta)
  \end{equation}
  with $C = -2 \zeta'(-1) - \frac{1}{6} \log(2\pi)$.
\end{enumerate}
\end{lemma}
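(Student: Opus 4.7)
The plan is to begin from the integral representation \eqref{eq:logZbb} on the vertical line $\Re(s)=3$ and to shift the contour leftward to $\Re(s)=-1/2$, collecting residues of the integrand at each pole that is crossed. The horizontal segments at height $\pm iT$ used to close off the argument contribute nothing in the limit $T\to\infty$ thanks to the exponential decay of $\Gamma$ given by Stirling's formula, combined with polynomial growth bounds on $\zeta$ in vertical strips. The poles encountered between the two lines are of three types: the simple pole of $\zeta(s-1)$ at $s=2$; the double pole of $\Gamma(s)\zeta(s+1)$ at $s=0$ (each factor contributing a simple pole); and the nontrivial zeros of $\zeta$ inside the critical strip, which create poles via the factor $1/\zeta(s)$. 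The apparent singularity at $s=1$ is removable, since the simple pole of the summand $\zeta(s)$ inside $\zeta(s-1)+\zeta(s)$ cancels the simple zero of $1/\zeta(s)$; one checks that $(\zeta(s-1)+\zeta(s))/\zeta(s)\to 1$ as $s\to 1$.

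Writing $\Sigma$ for the sum of residues at the nontrivial zeros enclosed by $\gamma$, the residue theorem then yields
\[
  \log Z(\beta,\beta)=\mathrm{Res}_{s=2}+\mathrm{Res}_{s=0}+\Sigma+I_{\mathrm{err}}(\beta),
\]
and a second application of the residue theorem on the closed contour $\gamma$ identifies $\Sigma$ with $I_{\mathrm{crit}}(\beta)$. The residue at $s=2$ is $\Gamma(2)\zeta(3)/(\zeta(2)\beta^{2})=\zeta(3)/(\zeta(2)\beta^{2})$, giving the leading term. For the residue at $s=0$, I would expand each factor locally: the Laurent expansions of $\Gamma(s)$ and $\zeta(s+1)$ at $0$ have constant terms $-\gamma$ and $\gamma$, so their product reads $\Gamma(s)\zeta(s+1)=1/s^{2}+O(1)$ with vanishing $1/s$ coefficient. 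The ratio $(\zeta(s-1)+\zeta(s))/\zeta(s)$ is analytic at $0$ with value $(\zeta(-1)+\zeta(0))/\zeta(0)=7/6$ and derivative expressible in terms of $\zeta'(-1)$ and $\zeta'(0)=-\tfrac{1}{2}\log(2\pi)$. Multiplying by $\beta^{-s}=1-s\log\beta+O(s^{2})$ and reading off the coefficient of $s^{-1}$ gives exactly $\tfrac{7}{6}\log(1/\beta)+C$ with $C=-2\zeta'(-1)-\tfrac{1}{6}\log(2\pi)$, which proves (iii).

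Bounds (i) and (ii) reduce to standard but delicate estimates. The key analytic inputs are Stirling's formula $|\Gamma(s)|\asymp|t|^{\Re(s)-1/2}e^{-\pi|t|/2}$, the classical convexity bounds for $\zeta$ in the critical strip, and the de~la~Vall\'ee~Poussin estimate $|1/\zeta(s)|=O(\log(2+|t|))$ valid on $\gamma_{\mathrm{right}}$ and, via the functional equation, on $\gamma_{\mathrm{left}}$. For (ii), on $\Re(s)=-1/2$ the factor $|\beta^{-s}|=\beta^{1/2}$ is the only $\beta$-dependent part while the remaining integrand is absolutely integrable uniformly in $\beta$, which gives $I_{\mathrm{err}}(\beta)=O(\beta^{1/2})$; differentiation brings down a factor $-s/\beta$ whose growth in $|t|$ is absorbed by the Gamma decay, producing $I_{\mathrm{err}}'(\beta)=O(\beta^{-1/2})=o(1/\beta)$.

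The main obstacle is the little-$o$ statement in (i). On $\gamma_{\mathrm{right}}$ one has $\Re(s)=1-A/\log(2+|t|)$, so $|\beta^{-s-k}|$ is only bounded by $\beta^{-1-k}\,\beta^{A/\log(2+|t|)}$, which is essentially of the target order $\beta^{-1-k}$. To extract an additional $o(1)$, one splits the contour at a height $T=T(\beta)$: for $|t|\leq T$ the uniform factor $\beta^{A/\log(2+T)}=\exp(-A|\log\beta|/\log(2+T))$ tends to $0$ with $\beta$ provided $\log T=o(|\log\beta|)$, while for $|t|>T$ the exponential decay of $\Gamma$ swamps the logarithmic factor from $1/\zeta$. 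A choice such as $T=|\log\beta|$ closes the argument for $k=0$, and the same splitting handles every derivative once the polynomial factor $s(s+1)\cdots(s+k-1)$ produced by differentiating $\beta^{-s}$ is absorbed into the Gamma decay.
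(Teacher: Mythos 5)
Your local computations are right and agree with the paper: the residue at $s=2$ is $\zeta(3)/(\zeta(2)\beta^{2})$, the singularity at $s=1$ is removable because the pole of the summand $\zeta(s)$ in the numerator cancels against the pole of $\zeta(s)$ in the denominator, and the double pole at $s=0$ yields $\tfrac76\log\tfrac1\beta-2\zeta'(-1)-\tfrac16\log(2\pi)$. Your treatment of (ii) is the paper's ( $1/\zeta$ is bounded on $\Re(s)=-\tfrac12$ ), and your height-splitting argument for (i) is a legitimate quantitative version of the paper's dominated-convergence argument, using the same inputs ($1/\zeta(s)=O(\log|\Im s|)$ along $\gamma$ and the exponential decay of $\Gamma$).

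The genuine gap is in the contour shift for (iii). To move the line of integration in \eqref{eq:logZbb} from $\Re(s)=3$ to $\Re(s)=-\tfrac12$ you close with horizontal segments at heights $\pm iT$ that cross the critical strip, where the integrand carries the factor $1/\zeta(s)$. The exponential decay of $\Gamma$ together with polynomial \emph{upper} bounds for $\zeta$ in vertical strips says nothing here: what is needed is a \emph{lower} bound for $|\zeta(\sigma+iT)|$ uniformly in $-1\le\sigma\le 2$, and since zeros accumulate at a rate $\asymp\log T$ per unit height, such a bound only holds along carefully chosen heights. This is exactly why the paper invokes Valiron's theorem (Titchmarsh, Theorem 9.7), providing $T_k\in(k,k+1)$ with $|\zeta(s)|>|\Im(s)|^{-\alpha}$ on $|\Im(s)|=T_k$, $-1\le\Re(s)\le2$; without this (or an equivalent input) your claim that the horizontal contributions vanish is unjustified. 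A second, related defect: your intermediate object $\Sigma$, the sum of residues over \emph{all} nontrivial zeros, is not known to converge unconditionally; it only makes sense as the limit of symmetric partial sums truncated at the special heights $T_k$, which is precisely how the paper formulates \eqref{eq:H_sum}, and your ``second application of the residue theorem'' identifying $\Sigma$ with $I_{\mathrm{crit}}$ requires the same truncation and the same control of $1/\zeta$ on the connecting segments. The paper avoids both issues by a single deformation of the line $\Re(s)=3$ onto the union of the contour $\gamma$ and the line $\Re(s)=-\tfrac12$, using rectangles truncated at $\pm iT_k$: then the only residues collected are those at $s=2$ and $s=0$, and $I_{\mathrm{crit}}$ enters directly as the integral \eqref{eq:H_int} over $\gamma$, never as a series over zeros. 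Your argument becomes correct once you (a) restrict the closing heights to a Valiron-type sequence $T_k$ and quote the corresponding lower bound for $|\zeta|$, and (b) either keep the zeros inside $\gamma$ as the paper does, or define $\Sigma$ as the limit of the truncated sums.
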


\definecolor{qqqqff}{rgb}{0,0,1}

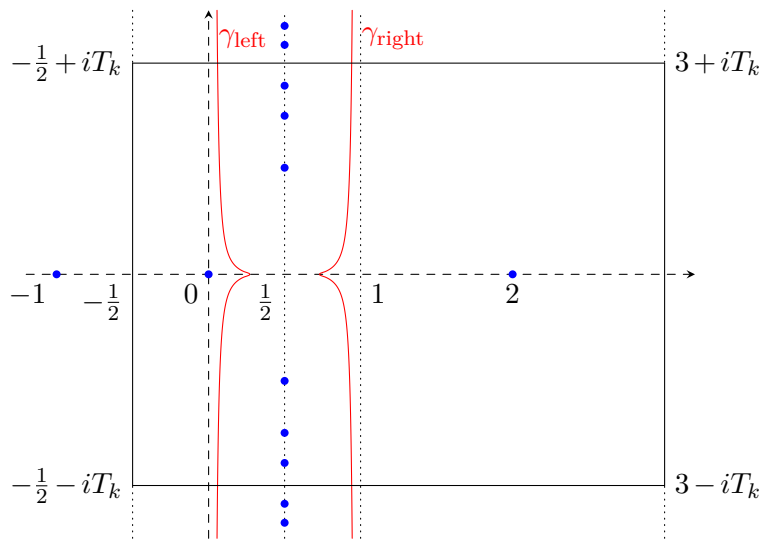
\begin{figure}
  \begin{center}
    \begin{tikzpicture}[line cap=round,line join=round,x=2.0cm,y=0.1cm,>=stealth]
    \draw[->,dashed] (-1.2,0) -- (3.2,0);
    \draw[->,dashed] (0,-35) -- (0,35);
    \draw [dotted] (1,-35) -- (1,35);
    \draw [dotted] (0.5,-35) -- (0.5,35);
    \draw [dotted] (-0.5,35)-- (-0.5,28);
    \draw [dotted] (-0.5,-35)-- (-0.5,-28);
    \draw (-0.5,-28) -- (-0.5,28);
    \draw (3,-28)-- (3,28);
    \draw [dotted] (3,35)-- (3,28);
    \draw [dotted] (3,-35)-- (3,-28);
    \draw[color=red] plot [smooth] file {gamma_right.table};
    \draw[color=red] plot [smooth] file {gamma_left.table};
    \draw (3,28) node[anchor=west] {$3+iT_k$};
    \draw (3,-28) node[anchor=west] {$3-iT_k$};
    \draw (-0.5,28) node[anchor=east] {$-\frac{1}{2}+iT_k$};
    \draw (-0.5,-28) node[anchor=east] {$-\frac{1}{2}-iT_k$};
    \draw[color=red] (0.94,34.04) node[anchor=north west] {$\gamma_{\mathrm{right}}$};
    \draw (-1,0) node[anchor=north east] {$-1$};
    \draw (-0.5,0) node[anchor=north east] {$-\frac{1}{2}$};
    \draw (2,0) node[anchor=north] {$2$};
    \draw (0,0) node[anchor=north east] {$0$};
    \draw (0.5,0) node[anchor=north east] {$\frac{1}{2}$};
    \draw (1,0) node[anchor=north west] {$1$};
    \draw[color=red] (0,33.79) node[anchor=north west] {$\gamma_{\mathrm{left}}$};
    \draw (3,-28)-- (-0.5,-28);
    \draw (3,28)-- (-0.5,28);
    \begin{scriptsize}
    \fill [color=qqqqff] (2,0) circle (1.5pt);
    \fill [color=qqqqff] (0,0) circle (1.5pt);
    \fill [color=qqqqff] (-1,0) circle (1.5pt);
    \fill [color=qqqqff] (0.5,14.13) circle (1.5pt);
    \fill [color=qqqqff] (0.5,21.02) circle (1.5pt);
    \fill [color=qqqqff] (0.5,25.01) circle (1.5pt);
    \fill [color=qqqqff] (0.5,30.42) circle (1.5pt);
    \fill [color=qqqqff] (0.5,-14.13) circle (1.5pt);
    \fill [color=qqqqff] (0.5,-21.02) circle (1.5pt);
    \fill [color=qqqqff] (0.5,-25.01) circle (1.5pt);
    \fill [color=qqqqff] (0.5,-30.42) circle (1.5pt);
    \fill [color=qqqqff] (0.5,32.94) circle (1.5pt);
    \fill [color=qqqqff] (0.5,-32.94) circle (1.5pt);
    \end{scriptsize}
    \end{tikzpicture}
  \end{center}
  \caption{Localization of poles (in blue) and contours of integration with which the residue theorem is applied to get the asymptotic expansion of $\log Z(\beta,\beta)$ as $\beta \to 0$.}
  \label{fig:contours}
\end{figure}

\begin{proof}
The existence of a suitable number $A$ is a consequence of
\cite{titchmarsh_riemann_1986}*{Theorem 3.8}, and the fact that the set
of nontrivial zeros of $\zeta$ is symmetric with respect to the vertical
line $\Re(s)=\frac12$.
In addition, the integrals defining $I_{\mathrm{crit}}(\beta)$ and
$I_{\mathrm{err}}(\beta)$ are convergent.
Indeed, along the path $\gamma$, the function $1/\zeta(s)$ is $O(\log(|\Im
(s)|))$ by formula (3.11.8) in \cite{titchmarsh_riemann_1986}.
Moreover, this domination makes it possible to apply Lebesgue's dominated
convergence theorem to show that the function $I_{\mathrm{crit}}(\beta)$
is $o(\beta^{-1})$ as $\beta \to 0$.
The same domination allows us to differentiate $I_{\mathrm{crit}}$
under the integral sign as many times as needed and yields that the $k$-th derivative of
$I_{\mathrm{crit}}(\beta)$ is $o(\beta^{-k-1})$.

Assertion \textit{(ii)} is easier since the function $1/\zeta(s)$ is
bounded on the vertical line $\Re(s)=-\frac12$.

We turn now to the proof of \textit{(iii)}.
As in the previous lemma, the strategy is to start from the integral
representation  \eqref{eq:logZbb} of $\log Z$. Here, in order to get a
sharper asymptotic expansion, we introduce a contour whose left side is
the line $\Re(s)=-\frac12$.
Since such a contour crosses the critical strip where the denominator
$\zeta(s)$ has zeros, we will use the following result of Valiron
\cite{titchmarsh_riemann_1986}*{Theorem 9.7}: there exists $\alpha > 0$
and a sequence $(T_k)$ such that for all $k \in \N$, $k < T_k < k+1$
and $|\zeta(s)| > |\Im(s)|^{-\alpha}$ uniformly for all $s$ such that
$|\Im(s)| = T_k$ and $-1 \leq \Re(s) \leq 2$.
Therefore, if one applies the residue theorem with the positevely oriented rectangle of vertices
$3\mp iT_k$ and $ -\frac{1}{2}\pm iT_k$, and lets $k$ tends to $+\infty$,
the contributions of the horizontal segments tend to $0$ and one gets
that $\log Z(\beta,\beta)$ is the sum of $I_{\mathrm{crit}}(\beta)$,
$I_{\mathrm{err}}(\beta)$ and of the residues of the integrated function
between the lines $\Re(s)=-\frac12$ and $\Re(s)=3$ which are outside of
the contour $\gamma$. Note that, by definition of the contour $\gamma$, there is no zero of $\zeta$ in this region.
See Figure~\ref{fig:contours} for a landscape of the proof.

Since the singularity at $s = 1$ is cancelled by the presence of
$\zeta(s)$ in the denominator, these residues come only from the simple
pole at $s=2$ and from the double pole at $s=0$.
It is straightforward to observe that the residue at $s=2$ equals
$(\zeta(3)/\zeta(2))\beta^{-2}$.
A more involved but yet elementary computation, based on the Laurent series expansions of $\Gamma$ and $\zeta$ (see \cite{titchmarsh_riemann_1986}), yields
\[
    \mathrm{Res}_{s=0} \left(\frac{\Gamma(s) \zeta(s+1)(\zeta(s-1)+\zeta(s))}{\zeta(s)\beta^s}\right) = \frac{7}{6}\log \frac{1}{\beta} - 2\zeta'(-1)-\frac{1}{6}\log(2\pi).
\]
The announced formula \eqref{eq:sf_log} is thus proven.
\end{proof}

The residue theorem applied to the above growing contours also leads to
the following alternative expression:
\begin{equation}
    \label{eq:H_sum}
    I_{\mathrm{crit}}(\beta) = 
    \lim_{k \to \infty} \sum_{|\Im (\rho)| < T_k} \frac{\Gamma(\rho)\zeta(\rho+1)\zeta(\rho-1)}{\zeta'(\rho) \beta^\rho},
\end{equation}
where $\rho$ runs through the zeros of $\zeta$ with $0 < \Re(\rho) <
1$. Here we have assumed for notational simplicity that these zeros
have multiplicity $1$ but analogous formulas are available for arbitrary multiplicities.

Remark that in the previous arguments, one could push the left-side of
the rectangular contour of integration as far as needed to the left in
order to obtain complete asymptotic series expansions.

\section{Proof of the theorems}

\subsection{First theorem}

Let us recall that the endpoint is defined by
\[
  \mathbf X(\omega)
  = (X_1(\omega),X_2(\omega))
  = \sum_{\bv \in \Prim} \omega(\bv)\,\bv.
\]
We look for a parameter $(\beta_1, \beta_2)$ such that
$\EE_{\bbeta}(X_1)=\EE_{\bbeta}(X_2)=n$.
If one chooses $\beta_1=\beta_2=\beta$, then
$\EE_{\bbeta}(X_1)=\EE_{\bbeta}(X_2)=\frac 1 2 \EE_{\bbeta}(X_1+X_2)$.
It now remains to find $\beta > 0$ satisfying $\EE_{\bbeta}(X_1+X_2) = 2n$.
A direct computation shows that $\EE_{\bbeta}(X_1+X_2)=-\frac{d}{d\beta}
\log Z(\beta, \beta)$.
By monotonicity of the function $\beta \mapsto \frac{d}{d\beta}\log
Z(\beta,\beta)$, we can find $\beta > 0$ depending on $n$ such that
$$
  \frac{d}{d\beta}\log Z(\beta,\beta) = -2n.
$$
>From now on, $\beta_1$ and $\beta_2$ are going to be chosen equal to
the unique solution $\beta$ of this equation, depending on $n$.
Note that $\beta$ tends to $0$ as $n$ tends to $\infty$.

As a consequence of statements \textit{(ii)} and \textit{(iii)} of
Lemma~\ref{lem:expansion}, the following asymptotic expansion holds:
\[
2n = -\frac{d}{d\beta}\log Z(\beta, \beta) = \frac{2\kappa}{\beta^3} -
    I_{\mathrm{crit}}'(\beta) + \frac{7}{6\beta} + o\left(\frac1\beta\right),
\]
where $\kappa = \zeta(3)/\zeta(2)$.
In order to estimate the error made when replacing the argument $\beta$ of
$I_{\mathrm{crit}}'$ by its first order approximation $(\kappa/n)^{1/3}$,
we use the estimates of Lemma~\ref{lem:expansion}, statement \textit{(i)},
for $I_{\mathrm{crit}}'$ and $I_{\mathrm{crit}}''$.
We obtain therefore
\begin{equation}
    \label{eq:sf_beta}
    \frac{1}{\beta^3} = \frac{n}{\kappa} + \frac{1}{2\kappa} I_{\mathrm{crit}}'\left(\left(\frac{\kappa}{n}\right)^{1/3}\right) - \frac{7}{12 \kappa} \left(\frac{n}{\kappa}\right)^{1/3} + o(n^{1/3}).
\end{equation}

The covariance matrix of the random vector $\mathbf X = (X_1,X_2)$
is equal to the Hessian matrix of $\log Z(\beta_1,\beta_1)$ at
$(\beta,\beta)$, as seen by a straightforward computation.
For the calibrated parameter $\beta$, Lemma~\ref{lem:cumulants} implies
that its determinant is asymptotically equal to
 \[
  \begin{vmatrix}
    \frac{2\kappa}{\beta^4} & \frac{\kappa}{\beta^4}\\
    \frac{\kappa}{\beta^4} & \frac{2\kappa}{\beta^4}
  \end{vmatrix}
  = \frac{3\kappa^2}{\beta^8}
  \sim \frac{3 n^{8/3}}{\kappa^{2/3}}.
\]
Therefore, a local limit theorem, which is a simplified version of Theorem
1.3 in \cite{bogachev_zarbaliev_universality_2011} (in the case $r=1$,
with the notation of the paper), gives
\begin{equation}\label{eq:sf_llt}
    \PP_\bbeta[X_1= n, X_2 = n] \sim \frac{\kappa^{1/3}}{2\pi \sqrt{3} n^{4/3}}.
\end{equation}
>From formula~\eqref{eq:link}, $\log p(n) = 2n\beta + \log Z(\beta,\beta) + \log \PP_\bbeta[X_1 =n, X_2 = n]$.
Hence, gathering \eqref{eq:sf_log}, \eqref{eq:sf_beta}, \eqref{eq:sf_llt}, we are able to state

\begin{align*}
   \log p(n) & = 2n\beta + \log Z(\beta,\beta) + \log \frac{\kappa^{1/3}}{2\pi\sqrt 3} - \frac 4 3 \log n + o(1)\\
   & =  3\kappa^{1/3} n^{2/3}
     + I_{\mathrm{crit}}\left(\left(\frac \kappa n\right)^{1/3}\right)
     - \frac{17}{18}\log n
     + \log \left( \frac{e^{-2\zeta'(-1)}}{(2\pi)^{7/6} \sqrt{3} \kappa^{1/18}}\right)
     + o(1).
\end{align*}
This concludes the proof of Theorem~\ref{thm:formula}.

\subsection{Second theorem}\label{ssec:second-theorem}

As we observed in the proof of statement \textit{(i)} of
Lemma~\ref{lem:expansion}, the fact that the additional term
$I_{\mathrm{crit}}((\kappa/n)^{1/3})$ is at most of order $o(n^{1/3})$
follows from the existence of large zero-free regions of the Riemann
zeta function in the critical strip.
This estimation can be considerably improved if one assumes Riemann's
Hypothesis, namely that all zeros of the zeta function in the critical
strip lie actually on the line $\Re(s) = \frac{1}{2}$.
Under this assumption, the oscillating term
$I_{\mathrm{crit}}((\kappa/n)^{1/3})$ is at most of order
$O(n^{\frac16+\epsilon})$ for all $\epsilon > 0$.
Indeed, the right-side $\gamma_{\mathrm{right}}$ of the contour of integration in formula \eqref{eq:H_int} can be replaced by a vertical line arbitrarily close to the critical line. This writing implies that, for all $\epsilon>0$, $I_{\mathrm{crit}}(\beta)=O(\beta^{-\frac12-\epsilon})$ as $\beta$ goes to 0. 
Hence, Riemann's Hypothesis implies \eqref{eq:H}.

Conversely, let us assume that hypothesis~\eqref{eq:H} holds. By Theorem \ref{thm:formula}, for all $\epsilon>0$, $I_{\mathrm{crit}}((\kappa/n)^{\frac13}))=O(n^{\frac16+\epsilon})$.
Since $I_{\mathrm{crit}}'(\beta)=o(\beta^{-2})$ by statement \textit{(i)} of Lemma~\ref{lem:expansion}, this implies that for all $\epsilon>0$, $I_{\mathrm{crit}}(\beta)=O(\beta^{-\frac12-\epsilon})$.
Statement \textit{(iii)} of Lemma~\ref{lem:expansion} then yields 
\begin{equation}\label{eq:H'}\tag{H'} 
\forall \epsilon>0,\quad \log Z(\beta, \beta)-\frac\kappa{\beta^2}=O\left(\frac1{\beta^{\frac12+\epsilon}}\right)
\end{equation}
as $\beta\to0$. 
Now, we compute the Mellin transform of $\log Z(\beta,\beta)=\sum_{\bv \in \Prim}\sum_{m \geq 1} \frac{1}{m} e^{-m \beta(v_1+v_2)}$ for $\Re(s)>2$ by manipulations similar to the proof of Lemma~\ref{lem:logZ}:
$$\int_0^{+\infty} \log Z(\beta,\beta)\beta^{s-1}d\beta= \frac{\Gamma(s) \zeta(s+1)(\zeta(s-1)+\zeta(s))}{\zeta(s)}.$$
We obtain therefore the following identity for $\Re(s)>2$:
\begin{multline*}
	\frac{\Gamma(s) \zeta(s+1)(\zeta(s-1)+\zeta(s))}{\zeta(s)}-\frac\kappa{s-2}=\\
	\int_0^1 \left(\log Z(\beta,\beta)-\frac\kappa{\beta^2}\right)\beta^{s-1}d\beta +\int_1^{+\infty} \log Z(\beta,\beta)\beta^{s-1}d\beta.
\end{multline*}
Condition \eqref{eq:H'} implies that the right-hand side function is holomorphic in the region $\Re(s)>1/2$, hence defining a holomorphic continuation of the left-hand side function in the same region.
Since $\Gamma(s)$, $\zeta(s+1)$ and $\zeta(s-1)$ do not vanish in the region $0 < \Re(s) < 1$, this prevents $\zeta(s)$ from having zeros in the region $1/2 < \Re(s) < 1$. As a consequence of the functional equation satisfied by $\zeta$ (see \cite{titchmarsh_riemann_1986}),  the distribution of the nontrivial zeros is symmetric with respect to the critical line. Hence, Riemann's Hypothesis is implied by \eqref{eq:H}.

\section{Comments}
    
\subsection{Numerical considerations}

In view of numerical computations, one can truncate the series defining
$I_{\mathrm{crit}}$ in $\eqref{eq:H_sum}$ at the two conjugate zeros of
the zeta function with smallest imaginary part, which is approximately
$\frac12 \pm i14.1347$.
Indeed, the next zeros have an imaginary part around $21.0220$ and the
exponential decrease of the $\Gamma$ function makes this next term around
$10^4$ times smaller than the first one.
Therefore, with a relative precision of $10^{-4}$,
\[
  I_{\mathrm{crit}}(\beta) \approx
  \frac{6.0240\cdot10^{-11} \cos(14.1347 \log \beta) + 9.5848\cdot10^{-10}\sin(14.1347 \log \beta)}{\sqrt{\beta}}
\]
for usual values of $\beta = (\kappa/n)^{1/3}$.
In addition, one can see from this approximation that for a large range of
values of $n$, the importance of the oscillatory term in the exponential
part of \eqref{eq:equivalent} is actually extremely small in comparison
with the polynomial pre-factor. 
It is therefore likely that formula $$\frac{e^{-2\zeta'(-1)}}{(2\pi)^{7/6} \sqrt{3} \kappa^{1/18} n^{17/18}}
    \exp\left[
      3\kappa^{1/3} n^{2/3}\right]$$ gives a sharp estimation of $p(n)$ as long as $n$ is ``large'' but less than $10^{60}$. A vague hint of this is given by the exact value of $p(100)=26878385993387721255010$ computed by Vaclav Kotesovec \cite{oeis} which differs from the analytic formula leading to
$ 2.4\cdot 10^{22}$ only by $10\%$. 
\subsection{Digitally convex polyominoes}

We present here a modification of our model which leads to
an asymptotic analysis of digitally convex polyominoes as in
\cite{bodini_asymptotic_2013}.
Our probabilistic approach notably differs from the saddle point method used in
this paper. We were not able to follow all the steps of the computations
in \cite{bodini_asymptotic_2013}, and we obtain eventually a slightly
different result.

Let us first recall that a digitally convex polyomino is the set of all
cells of $\ZZ^2$ included in a bounded convex region of the plane.
The contour of a digitally convex polyomino can be decomposed into four
specifiable sub-paths through the standard decomposition of polyominoes.
In \cite{bodini_asymptotic_2013}, the authors focus on one of these
paths, namely the one joining the rightmost lowest point to the highest
rightmost one. More precisely, they want to find the asymptotics of the
number $\widetilde p(n)$ of such paths with total length $n$.

Taking the convex hull of such a path defines a one-to-one correspondence
with (increasing) convex chains having no horizontal segment.
For this reason, we slightly modify the model by changing the set $\Prim$
into $\widetilde \Prim = \Prim\setminus \{(1,0)\}$ and the probability
measure on configurations which are now integer-valued functions on
$\widetilde \Prim$:
\[
  \widetilde\PP_\bbeta(\omega)
   = \prod_{\bv \in \widetilde\Prim}\left(1-e^{-\bbeta \cdot \bv}\right)e^{-\omega(\bv) \bbeta \cdot \bv}
   = \frac{1}{\widetilde Z(\bbeta)} \exp\left(- \bbeta \cdot \sum_{\bv \in \widetilde\Prim} \omega(\bv)\bv\right)
\]
where $\bbeta = (\beta,\beta)$ and
\[
    \widetilde Z(\bbeta) = \prod_{\bv \in \widetilde\Prim} \left(1 - e^{-\bbeta \cdot \bv}\right)^{-1}.
\]
Again, the ending point is equal to
\[
    \mathbf X(\omega) = (X_1(\omega),X_2(\omega)) = \sum_{\bv \in \widetilde\Prim} \omega(\bv) \bv.
\]
Moreover the connection between combinatorics and the probabilistic
model is encompassed in the formula
\[
    \widetilde \PP_\bbeta[X_1 + X_2 = n] = \frac{\widetilde p(n)}{\widetilde Z(\beta,\beta)}e^{- \beta n}.
\]
A computation similar to \eqref{eq:chi_manip} shows that the integral
representation \eqref{eq:logZbb} of the logarithmic partition function
is changed into
\[
    \log \widetilde Z(\beta,\beta) = \frac{1}{2i\pi} \int_{c-i\infty}^{c+i\infty} \frac{\Gamma(s) \zeta(s+1)\zeta(s-1)}{\zeta(s)\beta^s}\,ds,
\]
which leads to the following asymptotic expansion as $\beta$ tends to $0$:
\[
    \log \widetilde Z(\beta,\beta) = \frac{\kappa}{\beta^2} + I_{\mathrm{crit}}(\beta) + \frac 1 6 \log\frac 1 \beta + C + o(1),
\]
where the constants $\kappa$ and $C$ are still $\zeta(3)/\zeta(2)$ and $-2\zeta'(-1) - \frac 1 6 \log(2\pi)$.
Both formulas appeared already in the proof of Proposition 2.2 of \cite{bodini_asymptotic_2013}.
In order to have the correct calibration $\EE_\bbeta[X_1+X_2] = n$ for the
length of the path, we choose $\beta > 0$ depending on $n$ such that
$-\frac{d}{d\beta}\log \widetilde Z(\beta,\beta) = n$.
This leads to the following analogue to \eqref{eq:sf_beta}, as $\beta\to0$:
\[
\frac{1}{\beta^3} = \frac{n}{2\kappa}
    + \frac{1}{2\kappa} I_{\mathrm{crit}}'\left(\left(\frac{2\kappa}{n}\right)^{1/3}\right)
    - \frac{1}{12 \kappa} \left(\frac{n}{2\kappa}\right)^{1/3} + o(n^{1/3}).
\]
The variance $\sigma^2_n$ of $X_1+X_2$  can be computed by applying to the vector $(1,1)$ the quadratic form given by the covariance matrix of $(X_1,X_2)$ whose asymptotics at the first order coincide with the ones computed in last section:
 
 \[
   \sigma_n^2 \sim 
 \begin{pmatrix}
    1 & 1
  \end{pmatrix}
  \begin{pmatrix}
    \frac{2\kappa}{\beta^4} & \frac{\kappa}{\beta^4}\\
    \frac{\kappa}{\beta^4} & \frac{2\kappa}{\beta^4}
  \end{pmatrix}
   \begin{pmatrix}
    1 \\ 1
  \end{pmatrix}
  = \frac{6\kappa}{\beta^4}
  \sim \frac{3 n^{4/3}}{2^{1/3}\kappa^{1/3}}.
\]
As in the previous model, a local limit result applies to the discrete one-dimensional random variable $X_1+X_2$: the probability that this random variable is equal to its mean is asymptotically equal to
\[
    \PP_\bbeta[X_1+X_2 = n] \sim \frac{1}{\sqrt{2\pi \sigma_n^2}}\sim
    \frac{1}{\sqrt{3\pi}}\left(\frac{\kappa}{4}\right)^{\frac16}\frac 1{n^{\frac23}}.
\]
We do not provide here a complete proof of this estimate, but this can be shown by checking the assumptions of \cite{bureaux_2014}*{Proposition 7.1}, which  is close from what was done in \cite{bogachev_zarbaliev_universality_2011}.
We obtain finally, as $n\to\infty$,
\[
    \widetilde p(n) \sim
    \left(\frac{\kappa}{4}\right)^{\frac5{18}}\frac{e^{-2\zeta'(-1)}}{\sqrt 3 \pi^{\frac 2 3}}\frac{1}{n^{\frac{11}{18}}}
    \exp\left[
      3 \left(\frac{\kappa}{4}\right)^{\frac 1 3}n^\frac{2}{3} + I_{\mathrm{crit}}\left(\left(\frac{2 \kappa}{n}\right)^{\frac 1 3}\right)
    \right].
\]
This amends the constant in the prefactor term of \cite{bodini_asymptotic_2013}*{Proposition 2.5}.

\appendix

\section{Analytic continuation of the Barnes function}

We introduce here the Barnes zeta function defined for all $\bbeta=(\beta_1,\beta_2)\in[0,+\infty)^2$, $w>0$ and $s\in\CC$ in the region $\Re(s)>2$ by 
\[
  \zeta_2(s,w;\bbeta):= \sum_{v_1,v_2\geq0}\frac1{(w+\beta_1v_1+\beta_2v_2)^s}.
\]

\begin{lemma}\label{lem:barnes}
The meromorphic continuation of $\zeta_2(s,w ; \bbeta)$ to the half-plane $\Re(s) > 1$ is given by
\begin{align*}
\zeta_2(s,w ;\bbeta) =& \frac{1}{\beta_1\beta_2}\frac{w^{-s+2}}{(s-1)(s-2)} + \frac{(\beta_1 + \beta_2)w^{-s+1}}{2\beta_1\beta_2(s-1)} + \frac{w^{-s}}{4}\\
& - \frac{\beta_2}{\beta_1} \int_0^{+\infty} \frac{\{y\}-\frac{1}{2}}{(w+\beta_2 y)^s}\,dy - \frac{\beta_1}{\beta_2} \int_0^{+\infty} \frac{\{x\}-\frac{1}{2}}{(w+\beta_1 x)^s}\,dx\\
& - s\frac{\beta_2}{2} \int_0^{+\infty} \frac{\{y\}-\frac{1}{2}}{(w+\beta_2 y)^{s+1}}dy
- s\frac{\beta_1}{2} \int_0^{+\infty} \frac{\{x\}-\frac{1}{2}}{(w+\beta_1 x)^{s+1}}dx\\
& + s(s+1)\beta_1\beta_2 \int_0^{+\infty}\int_0^{+\infty} \frac{(\{x\} - \frac{1}{2})(\{y\}-\frac{1}{2})}{(w+\beta_1 x +\beta_2 y)^{s+2}}\,dxdy.
\end{align*}
\end{lemma}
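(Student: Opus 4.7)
The plan is to apply the Euler--Maclaurin summation formula twice, once to each discrete variable, which will both expose the polar structure and produce remainder integrals that converge in a larger half-plane. I would first recall the formula in its basic form: for sufficiently nice $f$,
$\sum_{n=0}^{\infty} f(n) = \int_0^\infty f(x)\,dx + \tfrac{1}{2} f(0) + \int_0^\infty \bigl(\{x\} - \tfrac12\bigr) f'(x)\,dx$,
valid as long as $f$ decays fast enough at infinity (in our setting, whenever $\Re(s)$ is large enough to make the original sum converge).

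The first application handles the inner sum over $v_2$ with $f(y) = (w + \beta_1 v_1 + \beta_2 y)^{-s}$. Computing the integral, the boundary value $f(0)$, and the derivative explicitly gives three contributions depending on $v_1$: one proportional to $(w+\beta_1 v_1)^{1-s}$, one to $(w+\beta_1 v_1)^{-s}$, and a remainder integral in $y$ whose integrand still contains $v_1$. I would then sum each piece over $v_1 \geq 0$. For the first two, Euler--Maclaurin applies directly with $f(x) = (w + \beta_1 x)^{1-s}$ and $(w+\beta_1 x)^{-s}$ respectively; this produces the three explicit polynomial-in-$w$ terms together with the two single integrals of exponent $s$. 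For the $v_1$-sum of the remainder integral, one interchanges sum and integral (legitimate by absolute convergence in the original region $\Re(s) > 2$) and applies Euler--Maclaurin once more to the resulting sum, now with exponent $s+1$; this yields the two single integrals with exponent $s+1$ and the double integral with exponent $s+2$. Collecting all contributions produces exactly the six terms in the statement.

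It remains to promote the identity from $\Re(s) > 2$ to the full half-plane $\Re(s) > 1$. The key observation is that, while the original series diverges outside its half-plane of absolute convergence, each of the remainder integrals on the right-hand side enjoys a larger domain of absolute convergence: using that $\{x\} - \tfrac12$ is bounded, the single integrals of exponent $s$ converge for $\Re(s) > 1$, those of exponent $s+1$ for $\Re(s) > 0$, and the double integral of exponent $s+2$ for $\Re(s) > 0$. The right-hand side is therefore holomorphic on $\Re(s) > 1$ except at the explicit poles produced by the factors $1/[(s-1)(s-2)]$ and $1/(s-1)$, and the claimed meromorphic continuation follows by analytic continuation.

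The main obstacle is essentially bookkeeping: each individual step is a standard application of Euler--Maclaurin, but the calculation has many moving parts, and one must check that the signs, the factors of $s$, $s+1$ and $\beta_1,\beta_2$, and the two independent $1/(s-1)$ contributions combine correctly into the stated $(\beta_1+\beta_2)$-coefficient. Justifying the Fubini step in the remainder term is the only non-cosmetic subtlety, but it is immediate in the initial region $\Re(s) > 2$, where the resulting double series is absolutely convergent.
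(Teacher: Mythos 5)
Your proposal is correct and follows essentially the same route as the paper: a double application of the first-order Euler--Maclaurin formula (the paper sums out one variable first and applies the formula to $F(x)=\sum_{n_2\ge 0}(w+\beta_1 x+\beta_2 n_2)^{-s}$, which is your argument with the roles of the two variables swapped), with the Fubini step justified by absolute convergence for $\Re(s)>2$ and the continuation obtained because the remainder integrals converge absolutely in the larger half-plane. The only cosmetic remark is that on the open half-plane $\Re(s)>1$ the factors $1/(s-1)$ contribute no pole, so the right-hand side is meromorphic there with its single pole at $s=2$, exactly as the paper observes.
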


\begin{proof}
We need only prove that the equality holds for $\Re(s) > 2$ since
the right-hand side of the equation is meromorphic in $\Re(s) > 1$
with a single pole at $s = 2$.
Let $\{x\} = x - \lfloor x \rfloor$ denote the fractional part of $x$.
We apply the Euler--Maclaurin formula to the partial summation
defined by $F(x) = \sum_{n_2\geq0} (w +\beta_1 x + \beta_2 n_2)^{-s}$,
leading to
\[
  \sum_{n_1 \geq 1} F(n_1) = \int_0^{\infty} F(x)\,dx - \frac{F(0)}{2} + \int_0^\infty (\{x\}-\frac{1}{2})F'(x)\,dx.
\]
We use again the Euler-Maclaurin formula for each of the summations in $n_2$. 
\end{proof}

We state here the results we need on the modified Barnes zeta function
$\chi$.

 \begin{corollary} \label{Barnes}
For all $\bbeta=(\beta_1,\beta_2)\in[0,+\infty)^2$, the function
$\chi(s;\bbeta)=\sum_{\bv \in \Z_+^2\setminus\{0\}}\frac{1}{(\bbeta
\cdot \bv)^s}$ can be meromorphically continued to the region $\Re (s)>1$.

\begin{enumerate}[(i)]

\item The function $\chi(s;\bbeta)$ has a unique pole at $s=2$ which is
simple and the residue is equal to $\frac1{\beta_1\beta_2}$.

\item For all nonnegative integers $k_1, k_2$, for all $\delta\in
(0,1)$, for all $\epsilon>0$,  there exists a constant $C>0$
such that for all $\bbeta=(\beta_1, \beta_2)\in (0,+\infty)^2$
satisfying $\epsilon < \frac{\beta_1}{\beta_2}<\frac1{\epsilon}$,
for all $s$ such that $1+\delta \leq \Re (s)\leq 3$,
$$
  \left|\frac{\partial^{k_1+k_2}}{\partial\beta_1^{k_1}\partial\beta_2^{k_2}}\chi(s;\bbeta)\right|\leq\frac{C|s|^C}{|\bbeta|^{k_1+k_2+\Re(s)}|s-2|}.
$$
\end{enumerate}

\end{corollary}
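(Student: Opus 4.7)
The plan is to reduce the study of $\chi(s;\bbeta)$ to that of the Barnes zeta function $\zeta_2$ from Lemma~\ref{lem:barnes} by splitting the summation set into the two coordinate axes and the open first quadrant. Writing $\Z_+^2\setminus\{0\} = \{(v_1,0):v_1\geq1\} \sqcup \{(0,v_2):v_2\geq1\} \sqcup \{(v_1,v_2):v_1,v_2\geq1\}$ and performing the change of variables $(v_1',v_2')=(v_1-1,v_2-1)$ in the interior sum, one obtains
\[
  \chi(s;\bbeta) = \frac{\zeta(s)}{\beta_1^s} + \frac{\zeta(s)}{\beta_2^s} + \zeta_2(s,\beta_1+\beta_2;\bbeta),
\]
an identity valid for $\Re(s)>2$ which will serve as the backbone of the proof.

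From this identity, assertion \textit{(i)} follows directly: Lemma~\ref{lem:barnes} continues $\zeta_2(s,w;\bbeta)$ meromorphically to $\Re(s)>1$, while the functions $\zeta(s)/\beta_i^s$ are holomorphic on that half-plane since the only pole of $\zeta$ lies at $s=1$, on the boundary. Inside $\Re(s)>1$, the sole singularity of $\zeta_2(s,\beta_1+\beta_2;\bbeta)$ comes from the first term $w^{-s+2}/[\beta_1\beta_2(s-1)(s-2)]$ of Lemma~\ref{lem:barnes} evaluated at $w=\beta_1+\beta_2$. This is a simple pole at $s=2$ with residue $1/(\beta_1\beta_2)$, which is exactly the claimed value for $\chi$.

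For assertion \textit{(ii)}, I would differentiate the decomposition term by term. The Riemann contributions satisfy
\[
  \frac{\partial^{k}}{\partial\beta_i^{k}}\left(\frac{\zeta(s)}{\beta_i^s}\right)
  = (-1)^k \frac{s(s+1)\cdots(s+k-1)}{\beta_i^{s+k}}\,\zeta(s),
\]
which on the strip $1+\delta\leq\Re(s)\leq 3$ is dominated by $C|s|^{k+C'}/\beta_i^{\Re(s)+k}$ via the standard convexity estimate on $\zeta$. The Barnes contribution is handled by differentiating the explicit formula of Lemma~\ref{lem:barnes} with $w=\beta_1+\beta_2$: each algebraic term $w^{-s+j}$ produces a polynomial-in-$s$ factor times $w^{-s+j-k_1-k_2}$, while the integrals can be differentiated under the integral sign throughout the strip, each differentiation bringing down a power of $x$ or $y$ that is balanced against a higher power of $(w+\beta_i x)$ or $(w+\beta_1 x+\beta_2 y)$ in the denominator, so that every resulting integral is bounded by a polynomial in $s$ times a negative power of $|\bbeta|$. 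Under the hypothesis $\epsilon<\beta_1/\beta_2<1/\epsilon$, one has $|\bbeta|\asymp\beta_1\asymp\beta_2\asymp\beta_1+\beta_2$, so all the inverse powers coalesce into the target $|\bbeta|^{-(k_1+k_2+\Re(s))}$, while the isolated $|s-2|^{-1}$ factor arises solely from the pole of the first term at $s=2$.

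The main obstacle I anticipate is a careful Leibniz-type bookkeeping of the mixed derivatives $\partial^{k_1+k_2}/\partial\beta_1^{k_1}\partial\beta_2^{k_2}$ applied to $\zeta_2(s,\beta_1+\beta_2;\bbeta)$, since $\beta_1$ and $\beta_2$ enter both through the shift $w=\beta_1+\beta_2$ and through the kernels of the integrals. Each step of differentiation yields a finite sum of new terms, and one must check that every resulting term still fits the claimed bound and that the interchange of derivative and integral remains legitimate; both points reduce to uniform integrability of the majorants in the strip, which is straightforward but tedious to track term by term.
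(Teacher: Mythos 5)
Your proposal is correct and follows essentially the same route as the paper: the decomposition $\chi(s;\bbeta)=\zeta(s)\beta_1^{-s}+\zeta(s)\beta_2^{-s}+\zeta_2(s,\beta_1+\beta_2;\bbeta)$ obtained by splitting off the two axes and shifting indices is exactly the identity the authors use, after which they likewise invoke Lemma~\ref{lem:barnes} and differentiate the explicit formula (algebraic terms plus integrals) formally, with the ratio condition $\epsilon<\beta_1/\beta_2<1/\epsilon$ making all inverse powers comparable to $|\bbeta|$. Your version simply spells out the residue computation at $s=2$ and the term-by-term differentiation bookkeeping in more detail than the paper's two-line proof.
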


\begin{proof}
Elementary manipulations on sums yield 
$$\chi(s;\bbeta)=\frac{\zeta(s)}{\beta_1^s}+\frac{\zeta(s)}{\beta_2^s}+\zeta_2(s,\beta_1+\beta_2;\bbeta).$$
We then apply Lemma~\ref{lem:barnes} and observe that the integral terms
on the right-hand side of the equation can be differentiated formally.
\end{proof}

\subsection*{Acknowledgements} The authors are pleased to thank Axel
Bacher, Philippe Duchon and Jean-Fran{\c c}ois Marckert for bringing to
their knowledge the reference \cite{bodini_asymptotic_2013} on digitally
convex polyominoes. They also want to thank warmly Christoph Koutschan for introducing
the sequence $p(n)$ in the OEIS, and Vaclav Kotesovec for computing its first 100 values.
Special thanks go to the anonymous referee who had a very careful reading of the manuscript.

The authors  finally thank ANR project PRESAGE for funding and organizing stimulating meetings around the subject.

\def\cprime{$'$} \def\cprime{$'$} \def\cprime{$'$}
\begin{bibdiv}
\begin{biblist}

\bib{arnold_statistics_1980}{article}{
      author={Arnold, V.~I.},
       title={Statistics of integral convex polygons},
        date={1980},
     journal={Funktsional. Anal. i Prilozhen.},
      volume={14},
      number={2},
       pages={1\ndash 3},
      review={\MR{575199 (81g:52011)}},
}

\bib{barany_limit_1995}{article}{
      author={B{\'a}r{\'a}ny, I.},
       title={The limit shape of convex lattice polygons},
        date={1995},
        ISSN={0179-5376},
     journal={Discrete Comput. Geom.},
      volume={13},
      number={3-4},
       pages={279\ndash 295},
      review={\MR{1318778 (95m:52037)}},
}

\bib{barany_pach_1992}{article}{
      author={B{\'a}r{\'a}ny, Imre},
      author={Pach, J{\'a}nos},
       title={On the number of convex lattice polygons},
        date={1992},
        ISSN={0963-5483},
     journal={Combin. Probab. Comput.},
      volume={1},
      number={4},
       pages={295\ndash 302},
      review={\MR{1211319}},
}

\bib{barany_vershik_1992}{article}{
      author={B{\'a}r{\'a}ny, I.},
      author={Vershik, A.~M.},
       title={On the number of convex lattice polytopes},
        date={1992},
        ISSN={1016-443X},
     journal={Geom. Funct. Anal.},
      volume={2},
      number={4},
       pages={381\ndash 393},
      review={\MR{1191566}},
}

\bib{bodini_asymptotic_2013}{article}{
      author={Bodini, O.},
      author={Duchon, Ph.},
      author={Jacquot, A.},
      author={Mutafchiev, L.},
      title={Asymptotic analysis and random sampling of digitally convex polyominoes},
      booktitle={Discrete Geometry for Computer Imagery, R. Gonzalez-Diaz et al. (eds.), Lecture Notes In Computer Science, v. 7749, Springer},
      pages={95\ndash 106},
      date={2013},
}

\bib{bogachev_zarbaliev_universality_2011}{article}{
      author={Bogachev, Leonid~V.},
      author={Zarbaliev, Sakhavat~M.},
       title={Universality of the limit shape of convex lattice polygonal
  lines},
        date={2011},
        ISSN={0091-1798},
     journal={Ann. Probab.},
      volume={39},
      number={6},
       pages={2271\ndash 2317},
      review={\MR{2932669}},
}

\bib{bureaux_2014}{article}{
   author={Bureaux, J.},
   title={Partitions of large unbalanced bipartites},
   journal={Math. Proc. Cambridge Philos. Soc.},
   volume={157},
   date={2014},
   number={3},
   pages={469--487},
   issn={0305-0041},
   review={\MR{3286519}},
}

\bib{MR1544776}{article}{
      author={Jarn{\'{\i}}k, Vojt\v{e}ch},
       title={\"{U}ber die {G}itterpunkte auf konvexen {K}urven},
        date={1926},
        ISSN={0025-5874},
     journal={Math. Z.},
      volume={24},
      number={1},
       pages={500\ndash 518},
      review={\MR{1544776}},
}

\bib{oeis}{article}{
  author={The On-Line Encyclopedia of Integer Sequences},
  title={Sequence A267862},
  date={2016},
  journal={published electronically at \url{http://oeis.org/A267862}},
}

\bib{MR1283251}{article}{
      author={Sinai, Ya.~G.},
       title={A probabilistic approach to the analysis of the statistics of
  convex polygonal lines},
        date={1994},
        ISSN={0374-1990},
     journal={Funktsional. Anal. i Prilozhen.},
      volume={28},
      number={2},
       pages={41\ndash 48, 96},
      review={\MR{1283251 (95f:52015)}},
}

\bib{titchmarsh_riemann_1986}{book}{
      author={Titchmarsh, E.~C.},
       title={The theory of the {R}iemann zeta-function},
     edition={2nd ed.},
   publisher={The Clarendon Press, Oxford University Press, New York},
        date={1986},
        ISBN={0-19-853369-1},
        note={Edited and with a preface by D. R. Heath-Brown},
      review={\MR{882550 (88c:11049)}},
}

\bib{MR1275724}{article}{
      author={Vershik, A.~M.},
       title={The limit form of convex integral polygons and related problems},
        date={1994},
        ISSN={0374-1990},
     journal={Funktsional. Anal. i Prilozhen.},
      volume={28},
      number={1},
       pages={16\ndash 25, 95},
      review={\MR{1275724 (95i:52010)}},
}

\end{biblist}
\end{bibdiv}
\begin{dajauthors}
\begin{authorinfo}[J. Bureaux]
Julien Bureaux\\
Universit\'e Paris Nanterre\\
  200 avenue de la R\'epublique 92001 Nanterre, France\\
  \href{mailto:julien.bureaux@math.cnrs.fr}{julien.bureaux@math.cnrs.fr}
\end{authorinfo}
\begin{authorinfo}[N. Enriquez]
Nathana\"el Enriquez\\
Universit\'e Paris Nanterre\\
  200 avenue de la R\'epublique 92001 Nanterre, France\\
  \smallskip
  Universit\'e Pierre--et--Marie--Curie, LPMA\\
 4 place Jussieu 75005 Paris, France\\
  \href{mailto:nathanael.enriquez@u-paris10.fr}{nathanael.enriquez@u-paris10.fr}
\end{authorinfo}
\end{dajauthors}

\end{document}